\pgfplotsset{compat=1.16}
\title{Path-Connectedness of the Hyperspace of Compact Subsets of $\mathbb{R}^n$}
\author{Bryant Rosado Silva$^\ast$ and Rodney Josué Biezuner$^\dagger$}
\date{}
\newcommand{\HE}[1]{\mathcal{H}\left(#1\right)}
\newcommand{\ol}[1]{\overline{#1}}
\newcommand{\Rn}{\mathbb{R}^n}
\theoremstyle{definition}
\newtheorem{definicao}{Definition}[subsection]
\newtheorem{exemplo}[definicao]{Example}
\newtheorem{observacao}[definicao]{Remark}
\theoremstyle{theorem}
\newtheorem{proposicao}[definicao]{Proposition}
\newtheorem{teorema}[definicao]{Theorem}
\newtheorem{lema}[definicao]{Lemma}
\renewenvironment{proof}{\noindent\textbf{Proof:}}{\hfill $\blacksquare$}
\newcommand\blfootnote[1]{%
  \begingroup
  \renewcommand\thefootnote{}\footnote{#1}%
  \addtocounter{footnote}{-1}%
  \endgroup
}
\definecolor{fifthblue}{HTML}{CF132C}
\definecolor{fourthblue}{HTML}{DB7F23}
\definecolor{thirdblue}{HTML}{C2C22D}
\definecolor{secondblue}{HTML}{52CC64}
\definecolor{firstblue}{HTML}{33A5C4}
\begin{document}
\maketitle
\blfootnote{$\ast$bryantrs99@ufmg.br (corresponding author)}
\blfootnote{$\dagger$rodneyjb@ufmg.br}

\vspace{-12pt}
\begin{center}
 \it Departamento de Matemática ICEx, Universidade Federal de Minas Gerais,\\
Av. Antônio Carlos 6627, Pampulha, CP 702, CEP 31270-901, Belo Horizonte, MG, Brazil
\end{center}

\begin{abstract}
 When one considers the collection $\HE{\Rn}$ of all compact subsets of $\Rn$ and equip it with a topology, many questions can be asked about the topological space one ends up with. This is an example of a hyperspace, a mathematical object which has been studied in a more abstract setting since the beginning of the 20th century. Here we give an elementary proof of the path-connectedness of $\HE{\Rn}$, with the topology induced by the Hausdoff metric, by exploring the vector structure of $\Rn$ and using only basic ideas of topology of metric spaces that undergraduate students with just a basic knowledge of these concepts will be able to understand.
\end{abstract}

\section{$\HE{X}$ and Hausdorff Metric}

\hspace{\parindent}Given a metric space $(X,d)$, take the collection of all non-empty compact subsets of $X$ and denote it by $\HE{X}$. When one provides a topology to this collection, it is called a \textbf{hyperspace}. In this text we will provide this collection with a metric, the Hausdorff metric, and use the topology induced by it. First we start with the usual idea of distance between a point and a set:

\begin{definicao}
Given a metric space $(X,d)$, let $B$ be a non-empty subset of $X$. If $x \in X$, the distance between $x$ and $B$ is defined as:
$$d(x,B)=\inf_{b \in B} d(x,b).$$
\hfill $\square$
\end{definicao}
Recall that if $B$ is compact, then the infimum is attained so one can just use the minimum.

This notion could be extended as usual to define the distance between subsets, taking the infimum over the first entry, but then the distance between every pair of distinct subsets that intersect each other would be zero which would do nothing to help us distinguish them. Worse, this would not give us a metric, so we will use the supremum instead.

\begin{definicao}
Given a metric space $(X,d)$, let $A$ and $B$ be points of $\HE{X}$, that is, compact and non-empty subsets of $X$. The distance from $A$ to $B$ is
$$\overline{d}(A,B)=\sup_{a \in A} d(a,B)=\max_{a \in A} d(a,B).$$
\hfill $\square$
\end{definicao}
Again, because of the compactness of $A$ and $B$, we have that there are points $\hat{x} \in A$ and $\hat{y} \in B$ such that $\overline{d}(A,B)=d(\hat{x},\hat{y})$.

It is important that this definition and what it implies is well understood by the reader, so we illustrate it with an example.

\begin{observacao}
 In every example we use the Euclidean metric and denote it by $d$.
\end{observacao}

\begin{exemplo}
Let $A$ be a line segment over the $x$-axis of $\mathbb{R}^2$ with length 1. Extend this line to the right, doubling the length, and shift it in the positive direction of the $y$-axis by one unit. Naming this new set as $B$, we have the following situation:

 \begin{figure}[H]
 \centering
  \begin{tikzpicture}[line width=1pt]
   \draw (0,0)--(1,0) node[below right] {$A$};
   \draw (0,1)--(2,1) node[above right] {$B$};
   \draw[|-|,xshift=-6pt,line width = 0.5pt] (0,0)--(0,1);
   \node[left,xshift=-6pt] at (0,0.5) {$1$};
   \draw[|-|,yshift=6pt,line width = 0.5pt] (2,1)--(0,1);
   \node[above,yshift=6pt] at (1,1) {$2$};
   \draw[|-|,yshift=-6pt,line width = 0.5pt] (1,0)--(0,0);
   \node[below,yshift=-6pt] at (0.5,0) {$1$};
  \end{tikzpicture}
  \caption{Line segments $A$ and $B$.}
 \end{figure}
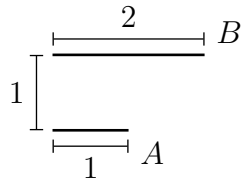
 Let us calculate $\overline{d}(A,B)$ and $\overline{d}(B,A)$:
 \begin{itemize}[label={$\bullet$}]
  \item $\overline{d}(A,B)$:

  By definition,
  $$\overline{d}(A,B)= \max_{a \in A} \{d(a,B)\}= \max_{a \in A} \left\{\min_{b \in B} d(a,b)\right\}.$$
  Fixing a point $a$ of $A$, the point $b$ of $B$ that is nearer to $a$, hence the one that realizes $d(a,B)$, is the point in the intersection of $B$ and the orthogonal line to $B$ that passes through $a$, whose length is 1.

  \begin{figure}[H]
 \centering
  \begin{tikzpicture}[line width=1pt]
   \draw (0,0)--(1,0) node[below right] {$A$};
   \draw (0,1)--(2,1) node[above right] {$B$};
   \draw[dash pattern = on 1.5pt off 1.5pt,] (0.5,0)--(0.5,1);
   \draw[fill=black] (0.5,0) circle (1.5pt) node[below] {$a$};
   \draw[fill=black] (0.5,1) circle (1.5pt) node[above] {$b$};
   \draw[line width=0.5pt] (0.5,0.8)--(0.7,0.8)--(0.7,1);
   \draw[fill=black] (0.6,0.9) circle (0.5pt);
  \end{tikzpicture}
  \caption{Calculating $d(a,B)$.}
 \end{figure}
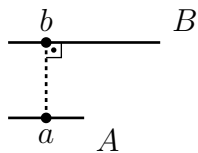
 Therefore, for all $a \in A$ we have that
 $$\min_{b \in B} d(a,b) = 1,$$
 and this means that $$\ol{d}(A,B)=1.$$

 \item $\overline{d}(B,A)$:

 In the same fashion,
 $$\overline{d}(B,A)= \max_{b \in B} \{d(b,A)\}= \max_{b \in B} \left\{\min_{a \in A} d(b,a)\right\}.$$

 This is the case that draws our attention to the fact that we still do not have a metric since $\overline{d}$ is not symmetric. Indeed, fixed $b$, there are two possibilities: either there exists a line passing through $b$ and orthogonal to $A$ or there is not such a line. In the first case, as we did when calculating $\overline{d}(A,B)$, we have $d(b,A)=1$, while in the other case we have $d(b,A)>1$ by the triangle inequality. In fact, this happens to every point of $b$ with $x$ coordinate greater than $1$ and in this situation the point $a \in A$ that minimizes the distance is the right end of $A$.

 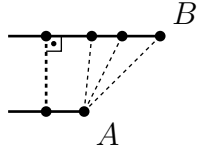
\begin{figure}[H]
 \centering
  \begin{tikzpicture}[line width=1pt]
   \draw (0,0)--(1,0) node[below right] {$A$};
   \draw (0,1)--(2,1) node[above right] {$B$};
   \draw[dash pattern = on 1.5pt off 1.5pt,] (0.5,0)--(0.5,1);
   \draw[fill=black] (0.5,0) circle (1.5pt);
   \draw[fill=black] (0.5,1) circle (1.5pt);
   \draw[line width=0.5pt] (0.5,0.8)--(0.7,0.8)--(0.7,1);
   \draw[fill=black] (0.6,0.9) circle (0.5pt);
   \draw[fill=black] (1.1,1) circle (1.5pt);
   \draw[fill=black] (1.5,1) circle (1.5pt);
   \draw[fill=black] (2,1) circle (1.5pt);
   \draw[fill=black] (1,0) circle (1.5pt);
   \draw[dash pattern= on 1.5pt off 1.5 pt, line width = 0.5pt] (1.1,1)--(1,0);
   \draw[dash pattern= on 1.5pt off 1.5 pt, line width = 0.5pt] (1.5,1)--(1,0);
   \draw[dash pattern= on 1.5pt off 1.5 pt, line width = 0.5pt] (2,1)--(1,0);
  \end{tikzpicture}
  \caption{Illustration of the two cases when calculating $\overline{d}(B,A)$.}
 \end{figure}
 Since we want the maximum of the distances over $B$, if we take the right end of $B$, we end up with $$\overline{d}(B,A)=\sqrt{2}$$
 \end{itemize}
and thus $\overline{d}(A,B)= 1 \ne \sqrt{2} = \overline{d}(B,A)$.
 \hfill $\square$
\end{exemplo}

There is a simple way to solve this problem which will lead us to the already mentioned (but not yet defined) Hausdorff metric:

\begin{proposicao}[Hausdorff metric]
Given a metric space $(X,d)$, let $A$ and $B$ be elements of $\HE{X}$. The function $h: X \times X \to \mathbb{R}$ given by
$$h(A,B)=\max\{\overline{d}(A,B),\overline{d}(B,A)\}$$
is a metric on $\HE{X}$.
\end{proposicao}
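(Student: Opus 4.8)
The plan is to verify the four metric axioms in turn, concentrating the real work on the triangle inequality. First I would check that $h$ is well defined and real-valued: for a fixed compact $B$ the map $x \mapsto d(x,B)$ is $1$-Lipschitz (an immediate consequence of the triangle inequality in $X$), hence continuous, so it attains a maximum on the compact set $A$; this is exactly what justifies writing $\overline{d}(A,B) = \max_{a \in A} d(a,B)$, and in particular $0 \le \overline{d}(A,B) < \infty$, so $h(A,B) \in \mathbb{R}$. Non-negativity of $h$ is then immediate, and symmetry is built into the definition, since $\max\{\overline{d}(A,B),\overline{d}(B,A)\} = \max\{\overline{d}(B,A),\overline{d}(A,B)\}$.

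Next I would treat identity of indiscernibles. If $A = B$, then every $a \in A$ lies in $B$, so $d(a,B) = 0$ and hence $\overline{d}(A,B) = 0$; symmetrically $\overline{d}(B,A) = 0$, so $h(A,B) = 0$. Conversely, if $h(A,B) = 0$ then both $\overline{d}(A,B) = 0$ and $\overline{d}(B,A) = 0$. From $\overline{d}(A,B) = 0$ we get $d(a,B) = 0$ for every $a \in A$; since $B$ is compact, hence closed, $d(a,B) = 0$ forces $a \in B$, so $A \subseteq B$. The symmetric argument gives $B \subseteq A$, whence $A = B$.

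The heart of the proof is the triangle inequality, and the key step is that $\overline{d}$ itself is subadditive: $\overline{d}(A,C) \le \overline{d}(A,B) + \overline{d}(B,C)$ for all $A,B,C \in \HE{X}$. To see this, fix $a \in A$ and $b \in B$; for every $c \in C$ the triangle inequality in $X$ gives $d(a,c) \le d(a,b) + d(b,c)$, and taking the infimum over $c \in C$ yields $d(a,C) \le d(a,b) + d(b,C) \le d(a,b) + \overline{d}(B,C)$. Now taking the infimum over $b \in B$ gives $d(a,C) \le d(a,B) + \overline{d}(B,C) \le \overline{d}(A,B) + \overline{d}(B,C)$, and finally taking the maximum over $a \in A$ gives the claimed inequality. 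Running the same argument with the roles of $A$ and $C$ interchanged yields $\overline{d}(C,A) \le \overline{d}(C,B) + \overline{d}(B,A)$. Combining the two with the elementary inequality $\max\{p+q,\,r+s\} \le \max\{p,r\} + \max\{q,s\}$ (applied with $p = \overline{d}(A,B)$, $q = \overline{d}(B,C)$, $r = \overline{d}(B,A)$, $s = \overline{d}(C,B)$) produces $h(A,C) \le h(A,B) + h(B,C)$, which finishes the verification.

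I expect the only delicate point to be bookkeeping in the chain of infimum/supremum manipulations for the triangle inequality: one must pass to the infimum over $c$ before the infimum over $b$, and check at each stage that the remaining bound no longer depends on the variable just eliminated. Everything else reduces to unwinding Definitions 1.1 and 1.2 together with the compactness facts recalled just after them.
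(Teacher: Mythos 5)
Your proposal is correct and follows essentially the same route as the paper: verify (i)--(iii) directly, and reduce the triangle inequality to the subadditivity of $\overline{d}$ followed by the elementary bound $\max\{p+q,r+s\}\le\max\{p,r\}+\max\{q,s\}$. The only differences are cosmetic — you argue identity of indiscernibles directly via closedness rather than by contradiction, and you manipulate infima instead of picking the points that realize them — neither of which changes the substance of the argument.
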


\begin{proof}
To show that $h$ is a metric, we need to prove that given $A,B$ and $C$ in $\HE{X}$, we have
\begin{enumerate}[label={(\roman*)}]
 \item $0\leq h(A,B) < \infty$;
 \item $h(A,B)=0$ if and only if $A=B$;
 \item $h(A,B)=h(B,A)$;
 \item $h(A,B) \leq h(A,C)+h(C,B)$.
\end{enumerate}

From the definition of $h$ it can be immediately seen that (i) and (iii) are satisfied. In order to prove (ii), notice that if $A=B$, for every $a \in A$ we have $\overline{d}(a,A)=0$, thus $h(A,A)=0$. If $h(A,B)=0$, assume by contradiction that $A \ne B$. With this assumption,
$$d(a,B)=0 \qquad \text{and} \qquad d(b,A)=0$$
for all $a \in A$ and $b \in B$. This implies that, $A \subset B$ and $B \subset A$ since $d(x,y)=0$ if and only if $x=y$. Therefore, $A=B$, a contradiction.

For the triangle inequality, take $c \in C$ such that $d(a,C)=d(a,c)$ and note that
\begin{align*}
 d(a,B)&= \min_{b \in B} d(a,b) \\
 &\leq \min_{b \in B} \{d(a,c)+d(c,b)\} \\
 &= d(a,c) + \min_{b \in B} d(c,b) \\
 &\leq d(a,C) + \ol{d}(C,B).
\end{align*}
So, choosing the point $a \in A$ satisfying $\overline{d}(A,B)=d(a,B)$, we end up with
\begin{align*}
 \ol{d}(A,B)=d(a,B)&\leq d(a,C) + \ol{d}(C,B)\\
 &\leq \ol{d}(A,C) + \ol{d}(C,B).
\end{align*}
Thus, doing the same thing for $\overline{d}(B,A)$, we conclude that
\begin{align*}
 h(A,B)&=\max \{\ol{d}(A,B),\ol{d}(B,A)\}\\
 &\leq \max \{\ol{d}(A,C)+\ol{d}(C,B),\ol{d}(B,C)+\ol{d}(C,A)\} \\
 &\leq \max \{\ol{d}(A,C),\ol{d}(C,A)\}+ \max\{\ol{d}(B,C),\ol{d}(C,B)\}\\
 &=h(A,C)+h(C,B).
\end{align*}

\end{proof}

This metric space $(\HE{X},h)$, called by Barnsley ``the space where fractals live''\cite[pp.~27]{Barn}, is an example of a hyperspace and this metric is the so-called Hausdorff metric or distance, and appeared first in Hausdorff's book \textit{Grundzüge der Mengenlehre} in 1914.

Before moving on to the next section, let us consider an important example that will help a lot in the understanding of our main result.

\begin{exemplo}
Consider the rectangular curves below, denoted by $A$ and $B$ respectively. We will compute $h(A,B)$ keeping always in mind that the sets $A$ and $B$ are just the boundaries of the rectangles, excluding their interiors.

 \begin{figure}[H]
 \centering
  \begin{tikzpicture}
   \draw[line width=1pt] (0,0) rectangle (7,5) node[below left] {$B$};
   \draw[line width=1pt] (1,1) rectangle (4,3) node[below left] {$A$};
   \draw[|-|,line width=0.5pt,dash pattern=on 1.5pt off 1.5pt,xshift=-6pt]
   (0,0)--(0,1);
   \draw[|-|,line width=0.5pt,dash pattern=on 1.5pt off 1.5pt,xshift=-6pt] (1,1)--(1,3);
   \draw[|-|,line width=0.5pt,dash pattern=on 1.5pt off 1.5pt,xshift=-6pt] (0,3)--(0,5);
   \node[xshift=-12pt] at (0,0.5) {$1$};
   \node[xshift=-12pt] at (0,4) {$2$};
   \node[xshift=-12pt] at (1,2) {$2$};
   \draw[|-|,line width=0.5pt,dash pattern=on 1.5pt off 1.5pt,yshift=6pt] (0,5)--(1,5);
   \draw[|-|,line width=0.5pt,dash pattern=on 1.5pt off 1.5pt,yshift=6pt] (1,3)--(4,3);
   \draw[|-|,line width=0.5pt,dash pattern=on 1.5pt off 1.5pt,yshift=6pt] (4,5)--(7,5);
   \node[yshift=12pt] at (0.5,5) {$1$};
   \node[yshift=12pt] at (2.5,3) {$3$};
   \node[yshift=12pt] at (5.5,5) {$3$};
  \end{tikzpicture}
 \end{figure}

 Let us start computing $\ol{d}(A,B)$. Since we are dealing with rectangular curves, it is possible to split $A$ in sets according to the distance of the point from $B$ by inspection without much trouble, as shown in the figure below:
\begin{figure}[H]
 \centering
  \begin{tikzpicture}
   \draw[line width=1pt] (0,0) rectangle (7,5) node[below left] {$B$};
   \draw[line width=1pt] (1,1) rectangle (4,3) node[below left] {$A$};
   \draw[color=green,line width=1.5pt] (4,1)--(1,1)--(1,3);
   \draw[color=green!50!black,line width=1.5pt] (4,1)--(4,2);
   \draw[color=green!50!black,line width=1.5pt] (4,1)--(4,2);
   \draw[color=cyan,line width=1.5pt] (4,2)--(4,3);
   \draw[color=green!50!black,line width=1.5pt] (1,3)--(2,3);
   \draw[color=orange,line width=1.5pt] (2,3)--(4,3);
   \draw[color=orange,fill=orange,line width=1pt] (4,2) circle (1.5pt);
   \draw[color=red,fill=red,line width=1pt] (4,2.5) circle (1.5pt);
   \draw[color=orange,fill=orange,line width=1pt] (4,3) circle (1.5pt);
   \draw[color=orange,fill=orange,line width=1pt] (2,3) circle (1.5pt);
   \draw[color=green,fill=green,line width=1pt] (1,3) circle (1.5pt);
   \draw[color=green,fill=green,line width=1pt] (4,1) circle (1.5pt);
   \draw[color=green,fill=green,line width=1pt] (1,1) circle (1.5pt);
   \draw[color=green,line width=3pt] (8,0.5)--(9,0.5) node[right,color=black] {$d(a,B)=1$};
   \draw[color=green!50!black,line width=3pt] (8,1.5)--(9,1.5) node[right,color=black] {$1<d(a,B)<2$};
   \draw[color=orange,line width=3pt] (8,2.5)--(9,2.5) node[right,color=black] {$d(a,B)=2$};
   \draw[color=cyan,line width=3pt] (8,3.5)--(9,3.5) node[right,color=black] {$2<d(a,B)<2.5$};
   \draw[color=red,line width=3pt] (8,4.5)--(9,4.5) node[right,color=black] {$d(a,B)=2.5$};
  \end{tikzpicture}
 \end{figure}
 Therefore, $\ol{d}(A,B)=2.5$. We do the same to compute $\ol{d}(B,A)$:
 \begin{figure}[H]
 \centering
  \begin{tikzpicture}
   \draw[line width=1pt] (0,0) rectangle (7,5) node[below left] {$B$};
   \draw[line width=1pt] (1,1) rectangle (4,3) node[below left] {$A$};
   \draw[color=green,line width=1.5pt] (0,1)--(0,3) (1,0)--(4,0);
   \draw[color=green!50!black,line width=1.5pt] (1,0)--(0,0)--(0,1) (0,3)--(0,5)--(1,5) (4,0)--(6,0) (4,5)--(5,5);
   \draw[color=green,line width=1.5pt] (0,1)--(0,3) (1,0)--(4,0);
   \draw[color=cyan,line width=1.5pt] (6,0)--(7,0)--(7,1) (7,3)--(7,5)--(5,5);
   \draw[color=Brown,line width=1.5pt] (7,1)--(7,3);
   \draw[color=blue!50!gray!50!white,line width=1.5pt] (1,5)--(4,5);
   \draw[color=orange,fill=orange,line width=1pt] (0,0) circle (2pt);
   \draw[color=orange,fill=orange,line width=1pt] (5,0) circle (2pt);
   \draw[color=orange,fill=orange,line width=1pt] (0,4) circle (2pt);
   \draw[color=orange,fill=orange,line width=1pt] (0,0) circle (2pt);
   \draw[color=green,fill=green,line width=1pt] (0,1) circle (2pt);
   \draw[color=green,fill=green,line width=1pt] (1,0) circle (2pt);
   \draw[color=green,fill=green,line width=1pt] (0,3) circle (2pt);
   \draw[color=green,fill=green,line width=1pt] (4,0) circle (2pt);
   \draw[color=red,fill=red,line width=1pt] (0,5) circle (2pt);
   \draw[color=red,fill=red,line width=1pt] (6,0) circle (2pt);
   \draw[color=yellow,fill=yellow,line width=1pt] (7,0) circle (2pt);
   \draw[color=yellow,fill=yellow,line width=1pt] (7,4) circle (2pt);
   \draw[color=red,fill=red,line width=1pt] (5,5) circle (2pt);
   \draw[color=blue!50!gray!50!white,fill=blue!50!gray!50!white,line width=1pt] (4,5) circle (2pt);
   \draw[color=blue!50!gray!50!white,fill=blue!50!gray!50!white,line width=1pt] (1,5) circle (2pt);
   \draw[color=Brown,fill=Brown,line width=1pt] (7,3) circle (2pt);
   \draw[color=Brown,fill=Brown,line width=1pt] (7,1) circle (2pt);
   \draw[color=Rhodamine,fill=Rhodamine,line width=1pt] (7,5) circle (2pt);
   \draw[color=green,line width=3pt] (-4,4.5)--(-3,4.5) node[right,color=black] {$d(b,A)=1$};
   \draw[color=green!50!black,line width=3pt] (7.5,1.5)--(8.5,1.5) node[right,color=black] {$1<d(b,A)<\sqrt{5}$};
   \draw[color=orange,line width=3pt] (-4,3.5)--(-3,3.5) node[right,color=black] {$d(b,A)=\sqrt{2}$};
   \draw[color=cyan,line width=3pt] (7.5,0.5)--(8.5,0.5) node[right,color=black] {$\sqrt{5}<d(b,A)<\sqrt{13}$};
   \draw[color=red,line width=3pt] (-4,1.5)--(-3,1.5) node[right,color=black] {$d(b,A)=\sqrt{5}$};
   \draw[color=Brown,line width=3pt] (-4,0.5)--(-3,0.5) node[right,color=black] {$d(b,A)=3$};
   \draw[color=yellow,line width=3pt] (7.5,3.5)--(8.5,3.5) node[right,color=black] {$d(b,A)=\sqrt{10}$};
   \draw[color=blue!50!gray!50!white,line width=3pt] (-4,2.5)--(-3,2.5)  node[right,color=black] {$d(b,A)=2$};
   \draw[color=Rhodamine,line width=3pt] (7.5,2.5)--(8.5,2.5) node[right,color=black] {$d(b,A)=\sqrt{13}$};
  \end{tikzpicture}
 \end{figure}
 So $\ol{d}(B,A)=\sqrt{13}$ and consequently $h(A,B)=\sqrt{13}$.

 \hfill $\square$
\end{exemplo}

\begin{observacao}
In other texts the hyperspace of compact subsets of $X$ is denoted as $\mathcal{K}(X)$ or $2^X$ as in \cite{Nad}.
\end{observacao}

\section{$\HE{\mathbb{R}^n}$ is path-connected}
\begin{definicao}
If $a$ and $b$ are two elements of a topological space $X$, a path from $a$ to $b$ is a continuous function
 \begin{align*}
 f:&[0,1] \to X, \text{ such that } f(0)=a \text{ and } f(1)=b.
 \end{align*}
 If there exist a path for every pair of elements of $X$, then $X$ is called \textbf{pathwise connected}.

 \hfill $\square$
\end{definicao}

It is interesting to note that the hyperspace inherits properties from its base space. For example, if $(X,d)$ is complete, then $(\HE{X},h)$ will be too \cite[pp.35]{Barn} and the same happens when the base space is compact. Now, what about connectedness? In \cite[pp.113]{Nad} it is proven that if $(X,d)$ is a \textit{continuum}, meaning a compact and connected metrizable space, then it will be more than pathwise connected, it will be arcwise connected which means that there is a path between any pair of elements and the inverse function of this path exists and is continuous (that is, there exists a path between the elements that contains no self-intersections). This is sufficient to establish that $\HE{\mathbb{R}^n}$ is pathwise connected, since every pair of compact sets of $\mathbb{R}^n$ is contained in a continuum. Here we are going to use a different and simpler approach (in the sense that nothing new is needed) to prove that $\HE{\mathbb{R}^n}$ is pathwise connected, inspired by the proof of the path connectedness of $\HE{\mathbb{R}}$ in \cite[pp.38-40]{Barn}.

\subsection{Paths in $\HE{\Rn}$}
From now on we will assume that $\mathbb{R}^n$ is equipped with the euclidean metric and by $\HE{\mathbb{R}^n}$ we mean $(\HE{\mathbb{R}^n},h)$. Our goal is to prove that given two elements $A$ and $B$ in $\HE{\Rn}$ there is a path from $A$ to $B$, that is, there exists $$f:[0,1] \to \HE{\Rn}$$ continuous such that $f(0)=A$ and $f(1)=B$.

A natural question is: can we interpret geometrically these paths in $\HE{\Rn}$? Since we want something continuous in the space of compact sets, does this mean that something happens to $A$ while the parameter changes? Well, yes, continuity has a fundamental role. If $f$ as above is continuous, given $t_0 \in [0,1]$ and $\varepsilon>0$, there is a $\delta>0$ such that if $t \in (t_0-\delta,t_0+\delta)\cap[0,1]$ then
$$h(f(t_0),f(t))< \varepsilon.$$
This means that any point of $f(t)$ is not further than $\varepsilon$ from some point of $f(t_0)$.

So, we can think of paths as deformations of $A$ into $B$ and fortunately, since we are working in $\Rn$, we can have a better idea because it is possible to visualize $\mathbb{R}^2$. Trying to make this as clear as possible, we are going to check some examples visualizing in $\mathbb{R}^2$ paths that are in $\HE{\mathbb{R}^2}$. The first one will make it easier to understand the proof of our main result.

\begin{exemplo}
Let $A$ be the set consisting of the single point $(a_x, a_y)$ and $B$ be the rectangle $[c_1,c_3] \times [c_2,c_4]$. We deform $A$ into $B$ through the following function $f:[0,1] \to \HE{\mathbb{R}^2}$ that will work as a path from $A$ to $B$ in $\HE{\mathbb{R}^2}$:
\begin{align*}
f(t) = [a_x+t(c_1-a_x),a_x+t(c_3-a_x)]\times[a_y+t(c_2-a_y),a_y+t(c_4-a_y)].
\end{align*}
Below, we illustrate the evolution of $f$ for some values of $t$.

 \begin{figure}[H]
 \centering
 \begin{tikzpicture}
  \foreach \t/\c in {1/green!50!black,0.75/brown,0.5/cyan,0.25/orange}{
  \draw[line width=0.5pt] (-5*\t,1-3*\t) rectangle (4*\t,1+2*\t);
  }
  \draw[fill=black] (0,1) circle (1.5pt) node[left] {$A$};
  \node[right] at (4,-1.9) {$B$};
  \node[above left] at (4,-2) {$t=1$};
  \node[above left] at (3,-1.25) {$t=0.75$};
  \node[above left] at (2,-0.5) {$t=0.5$};
  \node[below left] at (1,0.75) {$t=0.25$};
 \end{tikzpicture}
 \caption{Illustration of how we can interpret a path from $A$ to $B$.}
 \end{figure}
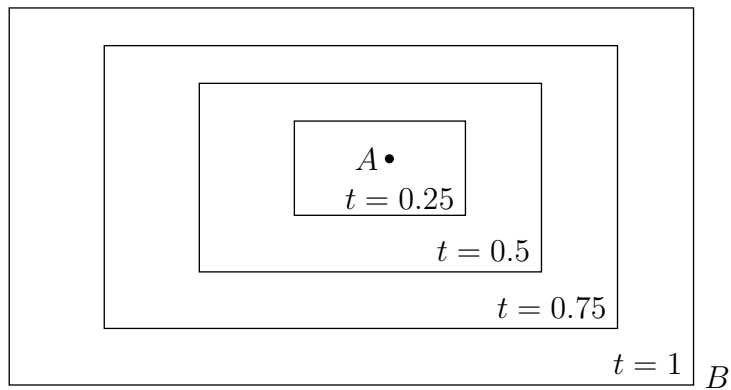

 \hfill $\square$
\end{exemplo}

This is a quite simple example (although not the simplest!) and we could do much more elaborated examples:

\begin{figure}[H]
 \centering
\begin{tikzpicture}
\draw[domain=0:360,samples=200,scale=0.35,fill=cyan,fill opacity=0.5] plot[smooth] (\x:{3+sin(8*\x)});
\node at (0,0) {$A$};
\end{tikzpicture}
\hspace{12pt}
\begin{tikzpicture}
\draw[domain=0:360,samples=200,scale=0.35,fill=cyan,fill opacity=0.5] plot[smooth] (\x:{3+sin(8*\x)/(1+\x*0.005)});
\end{tikzpicture}
\hspace{12pt}
\begin{tikzpicture}
\draw[domain=0:360,samples=200,scale=0.35,fill=cyan,fill opacity=0.5] plot[smooth] (\x:{3+sin(8*\x)/(1+\x*0.03)});
\end{tikzpicture}
\hspace{12pt}
\begin{tikzpicture}
\draw[domain=0:360,samples=200,scale=0.35,fill=cyan,fill opacity=0.5] plot[smooth] (\x:{3+sin(8*\x)/(25+\x*0.1)});
\end{tikzpicture}
\hspace{12pt}
\begin{tikzpicture}
\draw[domain=0:360,samples=200,scale=0.35,fill=cyan,fill opacity=0.5] plot[smooth] (\x:{3+sin(8*\x)/(100+\x)});
\node at (0,0) {$B$};
\end{tikzpicture}
\end{figure}

The most interesting thing here is that these paths can start with a pathwise connected set and end up with a discrete set or vice-versa, it can create holes or fill some, there are a lot of possibilities. Let's check more three examples, now with some description.

\begin{figure}[H]
\centering
\begin{subfigure}{\textwidth}
\centering
\begin{tikzpicture}
 \draw[fill=black] (0,0) circle (1.5pt);
 \draw (0,0) circle (30pt);
\end{tikzpicture}
\hspace{12pt}
\begin{tikzpicture}
 \draw (0,0) circle (5pt);
 \draw (0,0) circle (30pt);
\end{tikzpicture}
\hspace{12pt}
\begin{tikzpicture}
 \draw (0,0) circle (11pt);
 \draw (0,0) circle (30pt);
\end{tikzpicture}
\hspace{12pt}
\begin{tikzpicture}
 \draw (0,0) circle (25pt);
 \draw (0,0) circle (30pt);
\end{tikzpicture}
\hspace{12pt}
\begin{tikzpicture}
 \draw (0,0) circle (30pt);
\end{tikzpicture}
\subcaption{Set in $\mathbb{R}^2$ composed of a circumference and a dot that is taken to the same circumference by the path $f$ defined in $[0,1]$ as $f(t)=S(0;1)\cup S(0;t)$.}
\end{subfigure}
\vspace*{12pt}

\begin{subfigure}{\textwidth}
\centering
\begin{tikzpicture}
 \draw[fill=cyan,fill opacity=0.5] (0,0) circle (30pt);
\end{tikzpicture}
\hspace{12pt}
\begin{tikzpicture}
 \draw[fill=cyan,fill opacity=0.5,even odd rule]
 (0,0) circle (1.5pt)
 (0,0) circle (30pt);
\end{tikzpicture}
\hspace{12pt}
\begin{tikzpicture}
 \draw[fill=cyan,fill opacity=0.5,even odd rule]
 (0,0) circle (7pt)
 (0,0) circle (30pt);
\end{tikzpicture}
\hspace{12pt}
\begin{tikzpicture}
\draw[fill=cyan,fill opacity=0.5,even odd rule]
 (0,0) circle (20pt)
 (0,0) circle (30pt);
 \end{tikzpicture}
 \hspace{12pt}
\begin{tikzpicture}
 \draw (0,0) circle (30pt);
\end{tikzpicture}
\subcaption{Unitary disc deformed into a circumference by the path $f$ defined in $[0,1]$ as $f(t)=\overline{B(0;1)} \setminus B(0;t)$}
\end{subfigure}
\end{figure}
\vspace*{-12pt}
\begin{figure}[H]
 \continuedfloat
 \begin{subfigure}{\textwidth}
\centering
\begin{tikzpicture}
 \draw[fill=cyan,fill opacity=0.5] (0,0) rectangle (3,3);
\end{tikzpicture}
\hspace{12pt}
\begin{tikzpicture}
\foreach \i in {0,1,2}{
\foreach \j in {0,1,2}{
 \draw[fill=cyan,fill opacity=0.5] (\i,\j) rectangle (\i+1,\j+1);
 }
 }
\end{tikzpicture}
\hspace{12pt}
\begin{tikzpicture}
\foreach \i in {0,1,2}{
\foreach \j in {0,1,2}{
 \draw[fill=cyan,fill opacity=0.5] (\i,\j) rectangle (\i+0.75,\j+0.75);
 }
 }
\end{tikzpicture}
\hspace{12pt}
\begin{tikzpicture}
\foreach \i in {0,1,2}{
\foreach \j in {0,1,2}{
 \draw[fill=cyan,fill opacity=0.5] (\i,\j) rectangle (\i+0.25,\j+0.25);
 }
 }
\end{tikzpicture}
\hspace{12pt}
\begin{tikzpicture}
\foreach \i in {0,1,2}{
\foreach \j in {0,1,2}{
 \draw[fill=black] (\i,\j) circle (2pt);
 }
 }
\end{tikzpicture}
\subcaption{A square can be ``transformed'' into a discrete set composed by nine points. To have such a path, describe the square as a union of nine Cartesian products and use ideas explored until now, shrinking the intervals as $t$ increases.}
\end{subfigure}
\end{figure}

\subsection{Idea of the Proof}
We are going to start with two definitions that will simplify the process. We will denote the coordinates of a point $x \in \Rn$ by $x=(x^1,x^2,\ldots,x^n)$.

\begin{definicao}
Let $x,y \in \Rn$. The points $x$ and $y$ define the rectangle
$$\prod_{i=1}^n[\min\{x^i,y^i\},\max\{x^i,y^i\}].$$
Indeed, $x$ and $y$ are vertices of this rectangle.

\hfill $\square$
\end{definicao}

\begin{observacao}
Pay attention to the fact that here a rectangle need not to be a $n$-dimensional object. It can even be a point or a $3$-dimensional cube if $n \geq 3$. Everything depends on the coordinates of $x$ and $y$.
\end{observacao}

\begin{figure}[H]
\centering
 \begin{tikzpicture}
  \draw (0,0) rectangle (4,2);
  \draw[fill=black] (0,2) circle (2pt) node[above right] {$x_1=(0,2)$};
  \draw[fill=black] (4,0) circle (2pt) node[below left] {$y_1=(4,0)$};
 \end{tikzpicture}
 \hspace{36pt}
 \begin{tikzpicture}
  \draw (0,0) rectangle (4,2);
  \draw[fill=black] (0,0) circle (2pt) node[below right] {$x_2=(0,0)$};
  \draw[fill=black] (4,2) circle (2pt) node[above left] {$y_2=(4,2)$};
 \end{tikzpicture}
 \caption{Two ways to define the same rectangle, using the pairs of vertices $x_1,y_1$ or $x_2,y_2$.}
\end{figure}
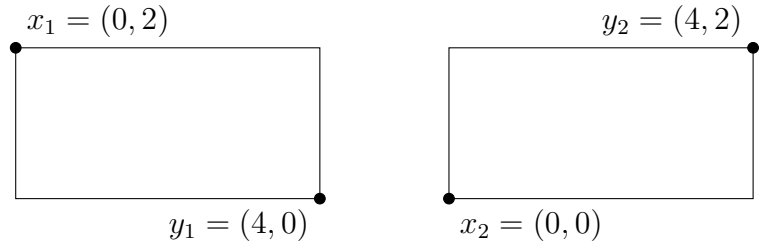

\begin{observacao}\label{retn}
 Note, as shown in the last figure, that there is more than one choice of pairs of vertices that define the same rectangle $R$. For any given rectangle $R$, we will choose as its vertices the ones that satisfy
 $$x^i = \min\{x^i,y^i\}, \quad y^i = \max \{x^i,y^i\}$$
 and
 $$R = \prod_{i=1}^n[x^i,y^i].$$
\end{observacao}

The next definition relies on the vector structure of $\Rn$ and will allow us to define a special class of paths.

\begin{definicao}
Let $A$ be a subset of $\Rn$ and $\vv{v} \in \Rn$. The translation of $A$ by $\vv{v}$ is denoted by
$$A+\vv{v} = \{a+\vv{v}: a \in A\}.$$

\hfill $\square$
\end{definicao}

Having understood what is a path in $\HE{\Rn}$ and the definitions above, we can start discussing the proof of path connectedness of $\HE{\Rn}$. It is simple but a bit long, so we split it in three parts:
\begin{enumerate}[label={\arabic*)}]
 \item Show that any translation of a set is pathwise connected to the original set. In particular, this means that any one point sets are pathwise connected.
 \item Show that any one point set is pathwise connected to any rectangle which contains the set.
 \item Show that any compact set is pathwise connected to any rectangle which contains it.
\end{enumerate}

Therefore, if we have two compact sets $A$ and $B$, there are rectangles $A_R$ and $B_R$ that contains $A$ and $B$ respectively and we can take $A_R$ and $B_R$ such that one is a translation of the other one. Doing this, we just need to juxtapose the paths from $A$ to $A_R$, $A_R$ to $B_R$ and $B_R$ to $B$, ending up with a path from $A$ to $B$ which concludes the proof of the path-connectedness of $\HE{\Rn}$.

\subsection{Proof}

\begin{lema} \label{translationpath}
Let $A \in \HE{\Rn}$ and $\vv{v} \in \Rn$. The map
 \begin{align*}
  f:& [0,1] \to \HE{\Rn} \\
  & t \mapsto A+ t\vv{v}
 \end{align*}
 is a path from $A$ to $A + \vv{v}$. This is called the \textbf{translation path}.
\end{lema}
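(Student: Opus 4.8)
The plan is to verify directly that $f(t) = A + t\vv{v}$ is continuous as a map into $(\HE{\Rn}, h)$, since the endpoint conditions $f(0) = A + \vv{0} = A$ and $f(1) = A + \vv{v}$ are immediate, and $f(t)$ is indeed in $\HE{\Rn}$ because the translate of a compact set is compact (translation by a fixed vector is a homeomorphism of $\Rn$). The whole content is therefore the continuity estimate, and the natural route is to show the stronger statement that $f$ is Lipschitz: $h(f(s), f(t)) \le |s - t|\,\|\vv{v}\|$ for all $s, t \in [0,1]$, which gives continuity with $\delta = \varepsilon / (\|\vv{v}\| + 1)$.

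To get the Lipschitz bound I would unwind the definition of $h$. Fix $s, t \in [0,1]$. Any point of $f(s) = A + s\vv{v}$ has the form $a + s\vv{v}$ for some $a \in A$, and the point $a + t\vv{v} \in f(t)$ is a candidate realizing (an upper bound for) $d(a + s\vv{v}, f(t))$. Hence
\begin{align*}
 d(a + s\vv{v}, f(t)) &\le \|(a + s\vv{v}) - (a + t\vv{v})\| = |s - t|\,\|\vv{v}\|,
\end{align*}
and taking the maximum over $a \in A$ gives $\ol{d}(f(s), f(t)) \le |s - t|\,\|\vv{v}\|$. By the symmetric argument (swapping the roles of $s$ and $t$), $\ol{d}(f(t), f(s)) \le |s - t|\,\|\vv{v}\|$ as well, so $h(f(s), f(t)) = \max\{\ol{d}(f(s), f(t)), \ol{d}(f(t), f(s))\} \le |s - t|\,\|\vv{v}\|$.

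From here continuity is routine: given $t_0 \in [0,1]$ and $\varepsilon > 0$, choose $\delta = \varepsilon / (\|\vv{v}\| + 1)$; then $|t - t_0| < \delta$ forces $h(f(t), f(t_0)) \le |t - t_0|\,\|\vv{v}\| < \varepsilon$. This establishes that $f$ is a path from $A$ to $A + \vv{v}$. I do not anticipate a genuine obstacle here — the only mild subtlety is remembering to check both one-sided distances $\ol{d}(f(s), f(t))$ and $\ol{d}(f(t), f(s))$ rather than just one, since $\ol{d}$ is not symmetric; in this particular case they happen to coincide because translation is a bijection, but the cleanest write-up still treats each in turn. The remark at the end of the statement, that one-point sets are thereby pathwise connected, follows by applying the lemma with $A = \{a\}$ and $\vv{v} = b - a$ for any two points $a, b$.
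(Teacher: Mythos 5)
Your proof is correct and takes essentially the same route as the paper: both bound $d(a+s\vv{v},f(t))$ by the distance to the candidate point $a+t\vv{v}$, obtaining $\ol{d}(f(s),f(t))\le|s-t|\,\|\vv{v}\|$ in each direction and hence uniform continuity. Your choice of $\delta=\varepsilon/(\|\vv{v}\|+1)$ even handles the $\vv{v}=\vv{0}$ case uniformly, which the paper treats separately.
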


 \begin{proof}
Clearly $f(0)=A$ and $f(1)=A+\vv{v}$. Moreover, $f$ is well defined because translations do not affect compactness. It remains to show that $f$ is continuous, but we will do more than this, we will prove that it is uniformly continuous. When $\vv{v}=\vv{0}$, $f$ is the constant path, so it is uniformly continuous.
 Assume for every $a \in A$ we have
 \begin{align*}
  d(a+t_1\vv{v},A+t_2\vv{v}) \leq d(a+t_1\vv{v},a+t_2\vv{v}) = |t_1\vv{v} - t_2\vv{v}|= |t_1 - t_2||\vv{v}| < \delta |\vv{v}|.
 \end{align*}
 Let $\varepsilon>0$ be given. Setting $\delta= \dfrac{\varepsilon}{|\vv{v}|}$, if $|t_1-t_2|<\delta$, we get $d(a+t_1\vv{v},A+t_2\vv{v}) < \varepsilon$. Thus,
 $$\ol{d}(A+t_1 \vv{v},A+t_2\vv{v}) = \max_{a \in A} d(a+t_1\vv{v},A+t_2\vv{v}) < \varepsilon.$$
 Similarly, if $|t_1 - t_2|<\delta$ then we also have $\ol{d}(A+t_2\vv{v},A+t_1\vv{v})< \varepsilon$. This proves uniform continuity because if $\delta< \dfrac{\varepsilon}{|\vv{v}|}$ then
\begin{align*}
h(f(t_1),f(t_2))&=\max \{\ol{d}(f(t_1),f(t_2)),\ol{d}(f(t_2),f(t_1))\} \\
&= \max\{\ol{d}(A+t_1\vv{v},A+t_2\vv{v}),\ol{d}(A+t_2\vv{v},A+t_1\vv{v})\}<\varepsilon.
\end{align*}

\end{proof}

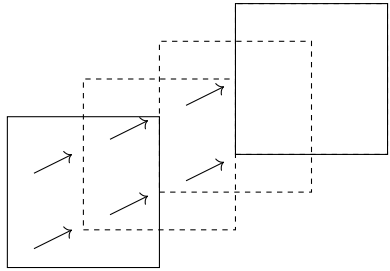
\begin{figure}[H]
\centering
\begin{tikzpicture}[scale=1]
\foreach \x in {0,1,2,3}{
\pgfmathtruncatemacro\myresult{0<\x?1:0}
\pgfmathtruncatemacro\myresultt{\x<3?1:0}
\ifnum\myresult>0\relax%
	\draw[dash pattern= on 2pt off 2pt] (\x,0+\x*0.5) rectangle (\x+2,2+\x*0.5);
\else%
	\draw (\x,0+\x*0.5) rectangle (\x+2,2+\x*0.5);
\fi
\ifnum\myresultt>0\relax%
\else%
	\draw (\x,0+\x*0.5) rectangle (\x+2,2+\x*0.5);
\fi
\pgfmathtruncatemacro\myresult{\x<3?1:0}
\ifnum\myresult>0\relax%
	\foreach \y in {0.25,1.25}
	\draw[->,yshift=0.45*\x cm] (\x.35,\y)--(\x.85,\y+0.25);
\else%
\fi}
\end{tikzpicture}
\caption{A translation path in $\HE{\mathbb{R}^2}$ visualized in $\mathbb{R}^2$.}
\end{figure}

Now we prove a lemma that will be very helpful for our next result.

\begin{lema} \label{dlem}
 Given two rectangles $A$ and $C$ with $A \subsetneq C$ there exist vertices $x$ of $A$ and $y$ of $C$ such that
 \begin{equation}
 \label{deqxy}h(A,C)=d(x,y).
 \end{equation}
\end{lema}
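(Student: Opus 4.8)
The plan is to write both rectangles in the canonical form of Remark~\ref{retn}, reduce $h(A,C)$ to a single one-sided distance, and then optimize coordinate by coordinate. Put $A=\prod_{i=1}^n[a^i,b^i]$ and $C=\prod_{i=1}^n[c^i,d^i]$ with $a^i\le b^i$ and $c^i\le d^i$, so that the inclusion $A\subsetneq C$ says exactly $c^i\le a^i\le b^i\le d^i$ for every $i$ (with at least one inequality strict somewhere, though strictness is not actually needed). The first step is to notice that $\ol{d}(A,C)=0$: since $A\subseteq C$, every $a\in A$ satisfies $d(a,C)=0$. Hence $h(A,C)=\ol{d}(C,A)=\max_{p\in C}d(p,A)$, and it is enough to exhibit a vertex $y$ of $C$ at which this maximum is attained, together with a vertex $x$ of $A$ with $d(x,y)=d(y,A)$.

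The second step is the elementary description of nearest points in an axis-parallel box. Given $p\in\Rn$, the coordinate-wise truncation $x^i:=\max\{a^i,\min\{b^i,p^i\}\}$ defines a point $x=(x^1,\dots,x^n)\in A$ with $d(p,A)=d(p,x)$, because for any $q\in A$ and any $i$ one has $|p^i-q^i|\ge|p^i-x^i|$, the number $x^i$ being the point of $[a^i,b^i]$ closest to $p^i$. Writing $\varphi_i(s):=d(s,[a^i,b^i])=\max\{a^i-s,\,0,\,s-b^i\}$ for $s\in\mathbb{R}$, this gives the separable formula $d(p,A)^2=\sum_{i=1}^n\varphi_i(p^i)^2$, so maximizing $d(p,A)$ over $p\in C$ amounts to maximizing each $\varphi_i(p^i)$ independently over $p^i\in[c^i,d^i]$.

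The key observation is that on $[c^i,d^i]$ the function $\varphi_i$ is non-increasing on $[c^i,a^i]$, vanishes on $[a^i,b^i]$, and is non-decreasing on $[b^i,d^i]$; hence it attains its maximum over $[c^i,d^i]$ at an endpoint, with $\varphi_i(c^i)=a^i-c^i\ge 0$ and $\varphi_i(d^i)=d^i-b^i\ge 0$. I would then define a vertex $y$ of $C$ by choosing, in each coordinate, the endpoint with the larger value: $y^i:=c^i$ if $a^i-c^i\ge d^i-b^i$, and $y^i:=d^i$ otherwise. By separability, $d(p,A)^2=\sum_i\varphi_i(p^i)^2\le\sum_i\max\{a^i-c^i,d^i-b^i\}^2=\sum_i\varphi_i(y^i)^2=d(y,A)^2$ for every $p\in C$, so $\ol{d}(C,A)=d(y,A)$. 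Finally, the nearest point $x$ of $A$ to this particular $y$ has $x^i=a^i$ when $y^i=c^i$ and $x^i=b^i$ when $y^i=d^i$, hence $x$ is a vertex of $A$ and $d(x,y)=d(y,A)=\ol{d}(C,A)=h(A,C)$, which is \eqref{deqxy}.

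I expect no genuine difficulty here; the only thing to be careful about is the bookkeeping forced by the canonical-vertex convention of Remark~\ref{retn}, namely checking that the quantities $a^i-c^i$ and $d^i-b^i$ are really nonnegative (so the comparisons above are meaningful) and that the points $x$ and $y$ produced are legitimate vertices even when some coordinate is degenerate — a degenerate coordinate of $C$ automatically forces $c^i=a^i=b^i=d^i$, and there $\varphi_i\equiv0$, so nothing can go wrong, while a degenerate coordinate of $A$ only removes a choice and again causes no trouble. The two geometric facts used — that the nearest point of a box is the coordinate-wise clamp, and that a separable nonnegative objective over a box is maximized at a vertex — are both at the undergraduate level the paper is aiming for.
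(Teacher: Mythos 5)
Your proof is correct and follows essentially the same route as the paper's: reduce $h(A,C)$ to $\ol{d}(C,A)$, describe the nearest point of $A$ as the coordinate-wise clamp, and maximize coordinate by coordinate by sending each $y^i$ to whichever endpoint of $[c^i,d^i]$ realizes the larger gap $\max\{a^i-c^i,\,d^i-b^i\}$. If anything, your explicit separable formula $d(p,A)^2=\sum_i\varphi_i(p^i)^2$ makes the final maximization step cleaner than the paper's, which only asserts that the extremal $y$ should satisfy $J_1\cup J_2=\{1,\dots,n\}$ and sit at a vertex.
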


\begin{proof}
Suppose that $A$ is defined by the vertices $a$ and $b$, and $C$ by $c$ and $d$. First of all, note that $\ol{d}(A,C) = 0$ and if $y \in C \cap A$, then $d(c,A)=0$. So, we must have $y \in C \setminus A$. In particular there exist two sets of indices $J_1, J_2 \subset \{1,\ldots ,n\}$ such that if $j \in J_1$, then
 $$ c^j < y^j \leq a^j$$
 and if $k \in J_2$, then
 $$b^k < y^k \leq d^k.$$
 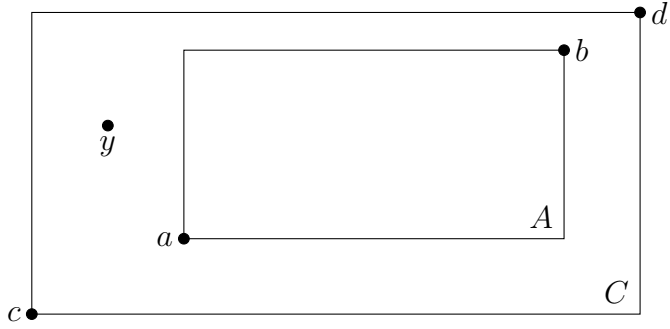
\begin{figure}[H]
 \centering
 \begin{tikzpicture}
  \draw (0,0)rectangle(8,4);
  \draw (2,1)rectangle(7,3.5);
  \draw[color=black,fill=black] (0,0) circle (2pt) node[left] {$c$};
  \draw[color=black,fill=black] (8,4) circle (2pt) node[right] {$d$};
  \draw[color=black,fill=black] (2,1) circle (2pt) node[left] {$a$};
  \draw[color=black,fill=black] (7,3.5) circle (2pt) node[right] {$b$};
  \draw[color=black,fill=black] (1,2.5) circle (2pt) node[below] {$y$};
  \node[above left] at (8,0) {$C$};
  \node[above left] at (7,1) {$A$};
 \end{tikzpicture}
 \caption{An illustration of a possible choice of $y$. In this case $1 \in J_1$ and $2 \not\in J_1 \cup J_2$.}
 \end{figure}
 \noindent Because $y \not\in A$, at least one of them is nonempty. Therefore, if we want to find $x \in A$ that is closest to $y$, it is reasonable to take $x_y$ such that:
 \begin{itemize}
  \item $x^i = a^i$, if $i \in J_1$;
  \item $x^i = b^i$, if $i \in J_2$;
  \item $x^i = y^i$, if $i \not\in J_1$ e $i \not\in J_2$.
 \end{itemize}
 In this case
 $$d(y,A) = d(y,x_y) = \left[\sum_{i \in J_1} (y^i - a^i)^2 + \sum_{i \in J_2} (y^i - b^i)^2\right]^{\frac{1}{2}}$$
 since there cannot be another element of $A$ closer to $y$. Since we want to maximize $d(y,A)$ over $y \in C \setminus A$, that is, we want to make the sum inside the square root as large as possible, it suffices to choose $y$ such that $\{1,\ldots,n\}=J_1 \cup J_2$ and with associated $x_y$ satisfying
 $$|y^i - x^i|= \max \{a^i-c^i,d^i-b^i\},$$
 which means that $y^i=c^i$ or $y^i=d^i$ for each $i=1,\ldots,n$. In other words, $y$ is a vertex of $C$ and by construction $x_y$ is a vertex of $A$. With this, we conclude that $h(A,C)=d(x,y)$.

\end{proof}

\begin{lema} \label{dtr}
Let $a,m,M \in \Rn$ with $m \ne M$ and such that the rectangle defined by $m$ and $M$ contains $a$. The map $f_{a,m,M}: [0,1] \to \HE{\Rn}$ defined by
$$f_{a,m,M}(t) = \prod_{i=1}^{n}[(m^i-a^i)t+a^i,(M^i-a^i)t+a^i]$$
is a path in $\HE{\Rn}$ from $\{a\}$ to the rectangle defined by $m$ and $M$.
\end{lema}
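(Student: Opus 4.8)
The plan is to check the three things that make $f_{a,m,M}$ a path --- correct endpoints, well-definedness into $\HE{\Rn}$, and continuity --- and to obtain continuity cheaply by showing the map is Lipschitz. Throughout I will use the convention of Remark~\ref{retn} for the rectangle defined by $m$ and $M$, so that $m^i\le M^i$ for every $i$; since this rectangle contains $a$, we then have $m^i\le a^i\le M^i$ for all $i$, and this is the inequality everything rests on.

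The endpoints are immediate: setting $t=0$ gives $f_{a,m,M}(0)=\prod_{i=1}^n[a^i,a^i]=\{a\}$, and $t=1$ gives $\prod_{i=1}^n[m^i,M^i]$, which is exactly the rectangle defined by $m$ and $M$. For well-definedness, fix $t\in[0,1]$: for each $i$ the left endpoint $(m^i-a^i)t+a^i$ is a convex combination of $a^i$ and $m^i$, hence lies in $[m^i,a^i]$, and likewise the right endpoint lies in $[a^i,M^i]$; in particular the left endpoint is $\le a^i\le$ the right endpoint, so $[(m^i-a^i)t+a^i,(M^i-a^i)t+a^i]$ is a nonempty closed bounded interval. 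A finite product of such intervals is nonempty and compact, so $f_{a,m,M}(t)\in\HE{\Rn}$. (This is also the only place where the convention $m^i\le M^i$ really matters: without it the intervals could degenerate to the empty set.)

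For continuity I would first record a monotonicity fact: if $0\le t_1\le t_2\le 1$, then $f_{a,m,M}(t_1)\subseteq f_{a,m,M}(t_2)$, because coordinatewise the left endpoint $(m^i-a^i)t+a^i$ is nonincreasing in $t$ (its slope $m^i-a^i$ is $\le 0$) while the right endpoint is nondecreasing, so each coordinate interval at $t_1$ sits inside the one at $t_2$. Hence $\ol d(f_{a,m,M}(t_1),f_{a,m,M}(t_2))=0$ and $h(f_{a,m,M}(t_1),f_{a,m,M}(t_2))=\ol d(f_{a,m,M}(t_2),f_{a,m,M}(t_1))$. I then apply Lemma~\ref{dlem} to the inclusion $f_{a,m,M}(t_1)\subseteq f_{a,m,M}(t_2)$ (equality being trivial): there are vertices $x$ of $f_{a,m,M}(t_1)$ and $y$ of $f_{a,m,M}(t_2)$ with $h=d(x,y)$, and from the proof of that lemma each coordinate difference $|x^i-y^i|$ equals $(a^i-m^i)|t_1-t_2|$ or $(M^i-a^i)|t_1-t_2|$, both of which are $\le(M^i-m^i)|t_1-t_2|$ since $a^i-m^i$ and $M^i-a^i$ are nonnegative and sum to $M^i-m^i$. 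Squaring and summing,
$$h\big(f_{a,m,M}(t_1),f_{a,m,M}(t_2)\big)\le |t_1-t_2|\left[\sum_{i=1}^n (M^i-m^i)^2\right]^{1/2}=|M-m|\,|t_1-t_2|,$$
so $f_{a,m,M}$ is Lipschitz and therefore (uniformly) continuous.

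There is no genuine obstacle; the only care needed is in quoting the vertex description from Lemma~\ref{dlem} correctly. If one prefers to avoid invoking that lemma, the same Lipschitz bound falls out of a direct estimate: for $z\in f_{a,m,M}(t_2)$ the distance from $z^i$ to the subinterval $[(m^i-a^i)t_1+a^i,(M^i-a^i)t_1+a^i]$ is at most the larger of the two endpoint gaps, each of which is bounded by $(M^i-m^i)|t_1-t_2|$, and summing the squares gives $\ol d(f_{a,m,M}(t_2),f_{a,m,M}(t_1))\le|M-m|\,|t_1-t_2|$.
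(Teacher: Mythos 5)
Your proof is correct and follows essentially the same route as the paper: establish the monotone nesting $f_{a,m,M}(t_1)\subseteq f_{a,m,M}(t_2)$, invoke Lemma~\ref{dlem} to reduce the Hausdorff distance to a distance between corresponding vertices, and bound it linearly in $|t_1-t_2|$. The only difference is cosmetic: your Lipschitz constant $|M-m|$ is a slightly cleaner bound than the paper's $n^{1/2}S$ with $S=\max\{\max_i|m^i-a^i|,\max_i|M^i-a^i|\}$, and you make the endpoint and well-definedness checks more explicit.
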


\begin{proof}
Notice that $f_{a,m,M}$ is well defined since $f_{a,m,M}(t)$ is a rectangle for every $t$ hence compact. In order to prove continuity again we show that $f_{a,m,M}$ is uniformly continuous. Assume $t_2 > t_1$ and denote $T_j=f_{a,m,M}(t_{j})$ for $j=1,2$. We have $T_1 \subset T_2$ since
{$$[(m^i-a^i)t_1+a^i,(M^i-a^i)t_1+a^i] \subset [(m^i-a^i)t_2+a^i,(M^i-a^i)t_2+a^i],$$}
for all $i$. (Notice that $a \in f_{a,m,M}(t)$ for all $t$, so $m^i-a^i \leq0$ and $M^i-a^i\geq 0$ for all $i$.)

\begin{figure}[H]
\centering
\begin{tikzpicture}[scale=1.4]
\draw[fill=black] (0,0) circle (1.5pt) node[below] {$a$};
\draw[fill=black] (1,1) circle (1.5pt) node[right] {$M$};
\draw[fill=black] (-6,-4) circle (1.5pt) node[left] {$m$};
\draw (1,1) rectangle (-6,-4);
\foreach \t in {0.25,0.5,0.8,1}{
\draw[dash pattern= on 2pt off 2pt] (1*\t,1*\t) rectangle (-6*\t,-4*\t);}
\draw[|-|,yshift=-4pt] (-3,-2)--(0.5,-2);
\node[below,yshift=-6pt] at (-1.25,-2) {\small$(M_1-m_1)t_1$};
\draw[|-|,yshift=-4pt] (-4.8,-3.2)--(0.8,-3.2);
\node[below,yshift=-6pt] at (-2,-3.2) {\small$(M_1-m_1)t_2$};
\draw[|-|,xshift=-4pt] (-3,-2)--(-3,0.5);
\node[below,rotate=-90,yshift=-6pt] at (-3,-0.75) {\small$(M_2-m_2)t_1$};
\draw[|-|,xshift=-4pt] (-4.8,-3.2)--(-4.8,0.8);
\node[below,rotate=-90,yshift=-6pt] at (-4.8,-1.2) {\small$(M_2-m_2)t_2$};
\draw[|-|,yshift=-4pt] (0.5,-4)--(0.8,-4);
\node[below,yshift=-6pt] at (0.65,-4) {\small$(M_1-a_1)(t_2-t_1)$};
\draw[|-|,yshift=-4pt] (-4.8,-4)--(-3,-4);
\node[below,yshift=-6pt] at (-3.9,-4) {\small$(m_1-a_1)(t_2-t_1)$};
\draw[|-|,xshift=-4pt] (-6,0.5)--(-6,0.8);
\node[left,xshift=-6pt] at (-6,0.65) {\small$(M_2-a_2)(t_2-t_1)$};
\draw[|-|,xshift=-4pt] (-6,-3.2)--(-6,-2);
\node[left,xshift=-6pt] at (-6,-2.6) {\small$(m_2-a_2)(t_2-t_1)$};
\end{tikzpicture}
\caption{Map $f_{a,M,m}(t)$ visualized in $\mathbb{R}^2$ for some values of $t$.}
\end{figure}
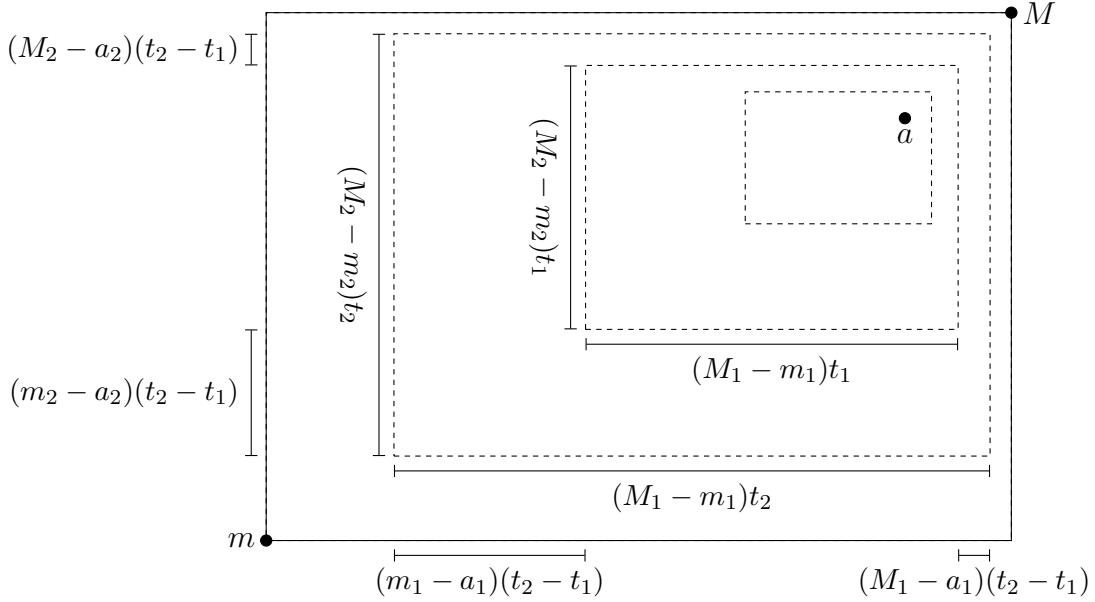

\noindent From the last lemma we have $h(T_1,T_2)=d(x,y)$, where $x$ and $y$ are vertices of $T_1$ and $T_2$, respectively, such that
$$|x^i - y^i|=\max \{(t_2-t_1)(M^i-a^i),(t_2-t_1)(a^i-m^i)\}.$$
Setting $$S=\max \left\{ \max_i |m_{i}-a_{i}|, \max_i |M_{i}-a_{i}| \right\}$$
we have
\begin{align*}
d(x,y) &= \left[ \sum_{i=1}^n \max \left\{ [(m_{i}-a_{i})(t_2-t_1)]^2,[(M_{i}-a_{i})(t_2-t_1)]^2\right\} \right]^{\frac{1}{2}} \\
& \leq \left[\sum_i^n S^2(t_2-t_1)^2\right]^\frac{1}{2} = n^\frac{1}{2}S(t_2-t_1).
\end{align*}
Thus, given $\varepsilon >0$, if we take $\delta = \dfrac{\varepsilon}{n^\frac{1}{2}S}$, for all $t_1,t_2 \in [0,1]$ such that $|t_1-t_2|<\delta$ there holds $$h(f(t_1),f(t_2))=h(T_1,T_2)=d(x,y)<\varepsilon.$$

\end{proof}

Now we show that any compact set is pathwise connected to any rectangle in which it is contained.

\begin{lema} \label{ctr}
Let $A$ be a compact subset of $\Rn$ and $m,M \in \Rn$ with $m \ne M$ such that the rectangle defined by $m$ and $M$ contains $A$. The map $f_{A,m,M}: [0,1] \to \HE{\Rn}$ defined by
$$f_{A,m,M}(t) = \bigcup_{a \in A} f_{a,m,M}(t)$$
is a path in $\HE{\Rn}$ from $A$ to the rectangle of vertices $m$ and $M$.
\end{lema}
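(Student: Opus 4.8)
The plan is to check the three things needed for $f_{A,m,M}$ to be a path in $\HE{\Rn}$ from $A$ to the rectangle $R$ defined by $m$ and $M$: the endpoint conditions, well-definedness (each $f_{A,m,M}(t)$ is a non-empty compact set), and continuity. The endpoints are immediate: since $f_{a,m,M}(0)=\{a\}$ and $f_{a,m,M}(1)=R$ by Lemma \ref{dtr}, taking the union over $a\in A$ gives $f_{A,m,M}(0)=\bigcup_{a\in A}\{a\}=A$ and $f_{A,m,M}(1)=\bigcup_{a\in A}R=R$.

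For well-definedness the union is obviously non-empty, and the real point is compactness, since an arbitrary union of compact sets need not be compact and the index set $A$ may be infinite. I would deal with this by first rewriting $f_{a,m,M}(t)$ in a more transparent way: a coordinatewise computation gives $(m^i-a^i)t+a^i=(1-t)a^i+tm^i$ and $(M^i-a^i)t+a^i=(1-t)a^i+tM^i$, so that $f_{a,m,M}(t)=\{(1-t)a+tr:r\in R\}$ and hence
$$f_{A,m,M}(t)=\{(1-t)a+tr:\ a\in A,\ r\in R\}.$$
This exhibits $f_{A,m,M}(t)$ as the image of the compact set $A\times R$ under the continuous map $(a,r)\mapsto(1-t)a+tr$, so it is compact. (Alternatively one argues directly that it is bounded — because $A\subseteq R$ and $R$ is convex force $f_{a,m,M}(t)\subseteq R$ for every $a$ — and closed, by a subsequence argument using compactness of $A$ together with continuity in $a$ of the two corner maps.)

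For continuity I would isolate a small general fact in the spirit of the earlier lemmas: if $\{A_\alpha\}$ and $\{B_\alpha\}$ are families of compact sets whose unions are again compact, then $h\!\left(\bigcup_\alpha A_\alpha,\bigcup_\alpha B_\alpha\right)\le\sup_\alpha h(A_\alpha,B_\alpha)$. Indeed, for $x\in A_{\alpha_0}$ we have $d\!\left(x,\bigcup_\alpha B_\alpha\right)\le d(x,B_{\alpha_0})\le\ol d(A_{\alpha_0},B_{\alpha_0})$, and taking the supremum over $x$ bounds $\ol d$ of the two unions by $\sup_\alpha\ol d(A_\alpha,B_\alpha)$; the same holds with the roles of $A$ and $B$ swapped. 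Applying this with $A_\alpha=f_{a,m,M}(t_1)$ and $B_\alpha=f_{a,m,M}(t_2)$ and then using the estimate from the proof of Lemma \ref{dtr}, namely $h(f_{a,m,M}(t_1),f_{a,m,M}(t_2))\le n^{1/2}S_a|t_1-t_2|$ with $S_a=\max\{\max_i|m^i-a^i|,\max_i|M^i-a^i|\}$, I would bound $S_a$ uniformly in $a$: since $m^i\le a^i\le M^i$, one has $S_a\le S:=\max_i(M^i-m^i)$, and $S>0$ because $m\ne M$. Hence $h(f_{A,m,M}(t_1),f_{A,m,M}(t_2))\le n^{1/2}S\,|t_1-t_2|$, and taking $\delta=\varepsilon/(n^{1/2}S)$ gives uniform continuity.

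The main obstacle is the compactness of $f_{A,m,M}(t)$: it is the only genuinely new ingredient, since once the union is known to be compact the continuity estimate is inherited from Lemma \ref{dtr} through the elementary union inequality above. (It is worth noting that Lemma \ref{dtr} can in fact be bypassed: pairing $x=(1-t_1)a+t_1r$ with $(1-t_2)a+t_2r$ yields $|x-(1-t_2)a-t_2r|=|t_1-t_2|\,|a-r|\le|t_1-t_2|\,\mathrm{diam}(R)$, which directly bounds $h(f_{A,m,M}(t_1),f_{A,m,M}(t_2))$ by $|t_1-t_2|\,\mathrm{diam}(R)$ — but using Lemma \ref{dtr} keeps the argument in line with the preceding development.)
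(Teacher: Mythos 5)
Your proof is correct, and the compactness step is genuinely different from (and slicker than) the paper's. The paper proves closedness of $f_{A,m,M}(t)$ by a sequential argument: given $x_n\to x$ with $x_n\in f_{a_n}(t)$, it extracts a convergent subsequence of $(a_n)$ and then, via an auxiliary pair of nested rectangles $w_n\subset W_n$ and Lemma \ref{dlem}, establishes the estimate $h(f_a(t),f_{a_n}(t))\le(1-t)\,d(a,a_n)$ to derive a contradiction from $x\notin f_a(t)$. Your observation that $f_{a,m,M}(t)=\{(1-t)a+tr:r\in R\}$, so that $f_{A,m,M}(t)$ is the continuous image of the compact set $A\times R$ under $(a,r)\mapsto(1-t)a+tr$, replaces that entire page with one line and also demystifies the paper's closing remark that ``a particular uncountable union of compact sets is compact'': it is simply a continuous image of a compact set. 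For continuity the two arguments are essentially the same — a uniform bound on $S_a$ by $\max_i(M^i-m^i)$ followed by passage to the union — except that the paper exploits the nesting $f_{A}(t_1)\subset f_{A}(t_2)$ to discard one of the two terms in $h$, whereas your symmetric inequality $h\left(\bigcup_\alpha A_\alpha,\bigcup_\alpha B_\alpha\right)\le\sup_\alpha h(A_\alpha,B_\alpha)$ handles both at once and is a reusable general fact. Your parenthetical shortcut, pairing $(1-t_1)a+t_1r$ with $(1-t_2)a+t_2r$ to get the Lipschitz bound $h(f_{A}(t_1),f_{A}(t_2))\le|t_1-t_2|\,\mathrm{diam}(R)$ directly, would in fact let one bypass Lemma \ref{dtr} and Lemma \ref{dlem} altogether; the paper's route buys consistency with its earlier development at the cost of considerably more machinery.
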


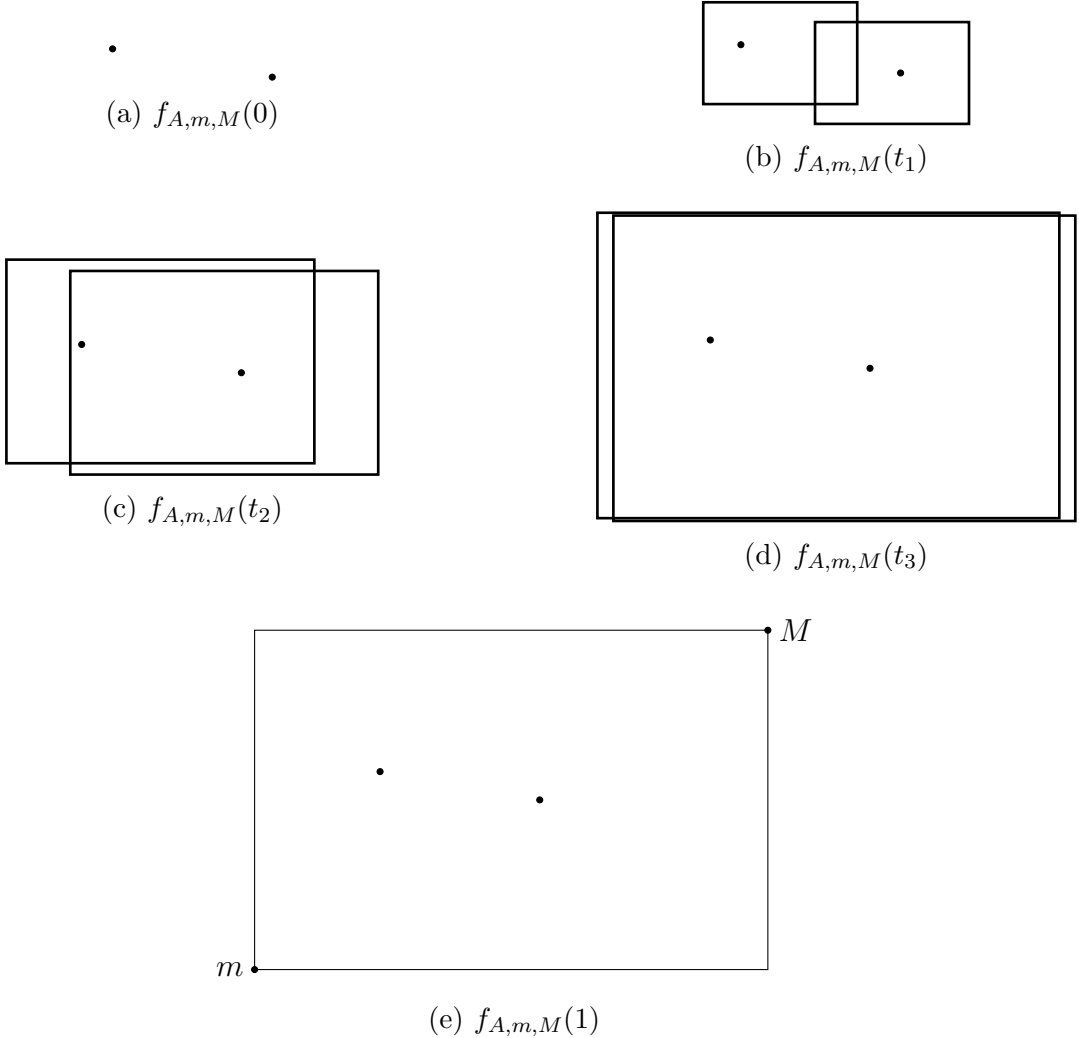
\begin{figure}[H]
\centering
\begin{subfigure}{0.49\linewidth}
\centering
\begin{tikzpicture}[scale=0.75]
\draw[color=black,fill=black] (-1.8,0.5) circle (1.5pt);
\draw[color=black,fill=black] (1,0) circle (1.5pt);
\end{tikzpicture}
\subcaption{$f_{A,m,M}(0)$}
\end{subfigure}
\begin{subfigure}{0.49\linewidth}
\centering
\begin{tikzpicture}[scale=0.75]
\foreach \x/\y in {-1.8/0.5,1/0}{
\foreach \t/\cor in {0.30/red}{
\pgfmathsetmacro\a{5*\t+\x*(1-\t)};
\pgfmathsetmacro\b{(3-\y)*\t+\y};
\pgfmathsetmacro\c{\x-(\x+4)*\t};
\pgfmathsetmacro\d{-(\y+3)*\t+\y};
\draw[color=black,line width=1pt] (\a,\b)--(\c,\b)--(\c,\d)--(\a,\d)--cycle;}}
\draw[color=black,fill=black] (-1.8,0.5) circle (1.5pt);
\draw[color=black,fill=black] (1,0) circle (1.5pt);
\end{tikzpicture}
\subcaption{$f_{A,m,M}(t_1)$}
\end{subfigure}

\vspace{12pt}
\begin{subfigure}{0.49\linewidth}
\centering
\begin{tikzpicture}[scale=0.75]
\draw[color=black,fill=black] (-1.8,0.5) circle (1.5pt);
\draw[color=black,fill=black] (1,0) circle (1.5pt);
\foreach \x/\y in {-1.8/0.5,1/0}{
\foreach \t/\cor in {0.60/thirdblue}{
\pgfmathsetmacro\a{5*\t+\x*(1-\t)};
\pgfmathsetmacro\b{(3-\y)*\t+\y};
\pgfmathsetmacro\c{\x-(\x+4)*\t};
\pgfmathsetmacro\d{-(\y+3)*\t+\y};
\draw[color=black,line width=1pt] (\a,\b)--(\c,\b)--(\c,\d)--(\a,\d)--cycle;}}
\end{tikzpicture}
\subcaption{$f_{A,m,M}(t_2)$}
\end{subfigure}
\begin{subfigure}{0.49\linewidth}
\centering
\begin{tikzpicture}[scale=0.75]
\draw[color=black,fill=black] (-1.8,0.5) circle (1.5pt);
\draw[color=black,fill=black] (1,0) circle (1.5pt);
\foreach \x/\y in {-1.8/0.5,1/0}{
\foreach \t/\cor in {0.90/fifthblue}{
\pgfmathsetmacro\a{5*\t+\x*(1-\t)};
\pgfmathsetmacro\b{(3-\y)*\t+\y};
\pgfmathsetmacro\c{\x-(\x+4)*\t};
\pgfmathsetmacro\d{-(\y+3)*\t+\y};
\draw[color=black,line width=1pt] (\a,\b)--(\c,\b)--(\c,\d)--(\a,\d)--cycle;}}
\end{tikzpicture}
\subcaption{$f_{A,m,M}(t_3)$}
\end{subfigure}

\vspace{12pt}
\begin{subfigure}{\textwidth}
\centering
\begin{tikzpicture}[scale=0.75]
\draw (5,3)--(-4,3)--(-4,-3)--(5,-3)--cycle;
\draw[color=black,fill=black] (-4,-3) circle (1.5pt) node[left] {$m$};
\draw[color=black,fill=black] (5,3) circle (1.5pt) node[right] {$M$};
\draw[color=black,fill=black] (-1.8,0.5) circle (1.5pt);
\draw[color=black,fill=black] (1,0) circle (1.5pt);
\end{tikzpicture}
\subcaption{$f_{A,m,M}(1)$}
\end{subfigure}
\caption{Mapping $f_{A,m,M}(t)$ when $A$ is a set of two points. For each $t$, $f_{A,m,M}(t)$ is the union of two rectangles. In this figure we have $0 <t_1<t_2<t_3<1$.}
\end{figure}

\begin{proof}
 The proof will be done in two steps. In the following, we write $f_{a}$ in place of $f_{a,m,M}$ and $f_{A}$ in place of $f_{A,m,M}$.

 \ul{\textit{Step 1}} - Show that $f_{A,m,M}(t)$ is compact for every $t \in [0,1]$.

For $t=0$ and $t=1$ $f_{A}(t)$ is trivially compact since it is a rectangle. Fixed $t$, $f_{A}(t)$ is a bounded set in $\Rn$ since for each $a \in A$ the set $f_a(t)$ is contained in $f_a(1)=f_A(1)$, therefore $f_A(t) \subset f_A(1)$ which is a rectangle. Thus, we just need to prove that $f_{A}(t)$ is closed. To this end, let $(x_n)$ be a sequence contained in $f_{A}$ converging to $x$. Since $(x_n) \subset f_{A}(t)$, for every $n$ there is a $a_n \in A$ such that $x_n \in f_{a_n}(t)$. This gives us a sequence $(a_n) \subset A$ and from the compactness of $A$ there is a subsequence, which we will continue to denote by $(a_n)$ in order to simplify the notation. We claim that $x \in f_{a}(t)$ and consequently $x \in f_{A}(t)$.

\begin{figure}[H]
\centering
\begin{tikzpicture}[scale=2.5,line width=1pt]
\foreach \x in {0,15,28,39,48,55}{
\draw[dash pattern=on 2pt off 2pt] (-2+0.\x,-1+0.0\x) rectangle (2+0.\x,1+0.0\x);
\draw[fill=black] (0.\x,0.0\x) circle (1pt);
\draw[color=green!70!black,fill=green!70!black] (-0.2,0.40+0.\x) circle (1pt);
}
\draw[color=blue,fill=blue] (0.63,0.063) circle (1pt) node[below] {\color{black}$a$};
\draw[color=blue] (-2+0.63,-1+0.063) rectangle (2+0.63,1+0.063);
\draw[color=red,fill=red] (-0.2,1.055) circle (1pt) node[above]{\color{black}$x$};
\end{tikzpicture}
\caption{In green we have a sequence $(x_n)$ and in black a sequence $(a_n)$ in $A$ (not depicted). The dashed rectangles represent some of the $f_{a_n}(t)$. From the figure it is natural to think that $x \in f_a(t)$.}
\end{figure}
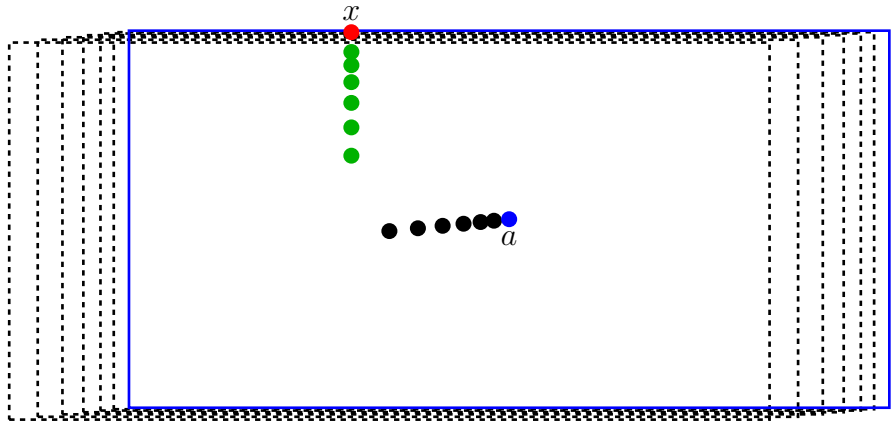

Suppose that $x \not\in f_{a}(t)$. Since $f_a(t)$ is a closed set, there is an open ball $B(x,\delta) \subset \Rn \setminus f_{a}(t)$. We will find an element of $f_{a}(t)$ in this ball obtaining a contradiction.

\begin{figure}[H]
\centering
\begin{tikzpicture}[scale=2.5,line width=1pt]
\foreach \x in {0,15,28,39,48,55}{
\draw[dash pattern=on 2pt off 2pt] (-2+0.\x,-1+0.0\x) rectangle (2+0.\x,1+0.0\x);
\draw[fill=black] (0.\x,0.0\x) circle (1pt);
\draw[color=green!70!black,fill=green!70!black] (-0.2,0.40+0.\x) circle (1pt);
}
\draw[color=blue,fill=blue] (0.63,0.063) circle (1pt) node[below] {\color{black}$a$};
\draw[color=blue,yshift=-12pt] (-2+0.63,-1+0.063) rectangle (2+0.63,1+0.063);
\draw (-0.2,1.055) circle (11pt);
\draw[color=red,fill=red] (-0.2,1.055) circle (1pt) node[above]{\color{black}$x$};
\end{tikzpicture}
\caption{The situation assumed in the contradiction.}
\end{figure}
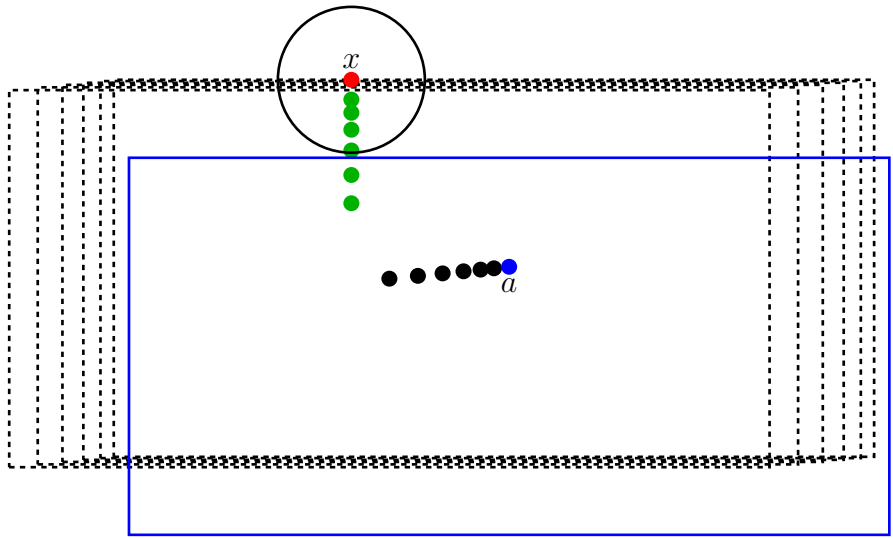

Given $\varepsilon>0$, let $N \in \mathbb{N}$ be such that if $n \geq N$, then $d(a_n,a) < \varepsilon$. We will estimate $h(f_a(t),f_{a_n}(t))$ in terms of $\varepsilon$. First, the sets $f_{a_n}(t)$ and $f_a(t)$ are
\begin{align*}
f_{a_n}(t) = \prod_{i=1}^{n}[(m^i-a_n^i)t+a_n^i,(M^i-a_n^i)t+a_n^i]\\
f_{a}(t) = \prod_{i=1}^{n}[(m^i-a^i)t+a^i,(M^i-a^i)t+a^i].
\end{align*}
In order to find an upper bound for the Hausdorff distance, we will define two special rectangles for each $n$. One of them will be denoted by $W_n$ and is determined by the vertices $Q_n$ and $R_n$, where
\begin{align*}
R_{n}^i&= \max\{(M^i-a_n^i)t+a_n^i,(M^i-a^i)t+a^i\}
\end{align*}
and
\begin{align*}
Q_{n}^{i}&= \min\{(m^i-a_n^i)t+a_n^i,(m^i-a^i)t+a^i\},
\end{align*}
while the other one will be denoted by $w_n$ and is determined by the vertices $r_n$ and $q_n$, where
\begin{align*}
r_{n}^i&= \min\{(M^i-a_n^i)t+a_n^i,(M^i-a^i)t+a^i\}
\end{align*}
and
\begin{align*}
q_{n}^i&= \max\{(m^i-a_n^i)t+a_n^i,(m^i-a^i)t+a^i\}.
\end{align*}

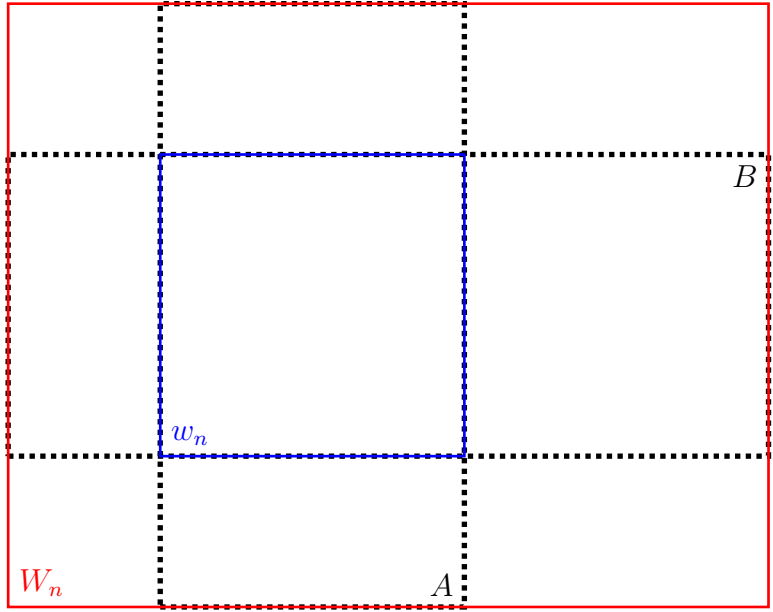
\begin{figure}[H]
\centering
\begin{tikzpicture}[scale=2]
\draw[dash pattern=on 2pt off 2pt,line width=2pt] (-2,-1) rectangle (3,1);
\draw[dash pattern=on 2pt off 2pt,line width=2pt] (-1,-2) rectangle (1,2);
\draw[color=blue,line width=1pt] (-1,-1) rectangle (1,1);
\draw[color=red,line width=1pt] (-2,-2) rectangle (3,2);
\node[above right,color=red] at (-2,-2) {$W_n$};
\node[above right,color=blue] at (-1,-1) {$w_n$};
\node[above left] at (1,-2) {$A$};
\node[below left] at (3,1) {$B$};
\end{tikzpicture}
\caption{Example of how rectangles $A$ and $B$ generate the rectangles $W_n$ and $w_n$ in $\mathbb{R}^2$.}
\end{figure}

\noindent Note that we have $w_n \subset f_{a}(t) \cap f_{a_n}(t) \subset f_a(t) \cup f_{a_n}(t) \subset W_n$, thus $h(f_a(t),f_{a_n}(t)) \leq h(w_n,W_n)$ and in particular  both $|R_{n}^i-r_{n}^i|$ and $|Q_{n}^i-q_{n}^i|$ are equal to $(1-t)|a^i-a_n^i|$. Since $w_n \subset W_n$, by Lemma \ref{dlem} we obtain our estimate
\begin{align*}
h(w_n,W_n) &= \left( \sum_{i=1}^n \max \{|R_{n}^i-r_{n}^i|^2,|Q_{n}^i-q_{n}^i|^2\}\right)^{\frac{1}{2}} \\
&= \left(\sum_{i=1}^n (1-t)^2|a^i-a_n^i|^2\right)^{\frac{1}{2}} \\
&= (1-t)\left(\sum_{i=1}^n |a^i-a_n^i|^2\right)^{\frac{1}{2}} \\
&= (1-t)d(a,a_n)\\
&< (1-t)\varepsilon.
\end{align*}

Thus, choosing $\varepsilon = \dfrac{\delta}{2(1-t)}$ for $n \geq N$, we have
$$h(f_a(t),f_{a_n}(t)) \leq h(w_n,W_n) < \dfrac{\delta}{2}.$$
This means that for $x_n \in f_{a_n}(t)$ there is some $b \in f_a(t)$ such that $d(x_n,b) < \delta /2$. Since $(x_n) \to x$, there is $N' \in \mathbb{N}$ such that if $n \geq N'$  , then $d(x_n,x) < \delta/2$. Taking $N_0 = \max \{N,N'\}$, if $n \geq N_0$ then
$$d(x,b) \leq d(x,x_n)+d(x_n,b) < \dfrac{\delta}{2} + \dfrac{\delta}{2} = \delta$$
so $$B(x,\delta)\cap f_{a}(t) \ne \emptyset,$$
the expected contradiction. We now have that $f_a(t)$ is closed and hence a compact set.

\ul{Step 2} - Show that $f_{A}(t)$ is a path, that is, it is continuous. Indeed it is uniformly continuous.

We will use the uniform continuity of $f_{a}(t)$. As we saw in the proof of Lema \ref{dtr}, given $\varepsilon > 0$ and $a \in A$, there exists $\delta(a)=\dfrac{\varepsilon}{n^\frac{1}{2}S_a}$ such that if $|t_1 - t_ 2|<\delta$ then $h(f_{a}(t_1),f_{a}(t_2))< \varepsilon$, where
$$S_a=\max \left\{ \max_i \ |m_{i}-a_{i}|, \max_i |M_{i}-a_{i}| \right\}.$$
Set
$$S=\sup_{a\in A} S_a = \max_{a \in A} S_a \leq \max_i {|M^i-m^i|}$$
where the last inequality is due to the fact that $$m^i \leq a^i \leq M^i$$
for all $i$ since $a$ is in the rectangle defined by $m$ and $M$. Therefore, if $|t_1-t_2|<\delta = \dfrac{\varepsilon}{nS}$, we have
$$h(f_{a}(t_1),f_{a}(t_2)) < \varepsilon \text{ for all } a \in A,$$
since $\delta < \delta(a)$ for all $a \in A$.

Now,
$$h(f_{A}(t_1),f_{A}(t_2)) = \max \{\ol{d}(f_{A}(t_1),f_{A}(t_2)),\ol{d}(f_{A}(t_2),f_{A}(t_1))\}$$
so we can assume without loss of generality that $t_1 < t_2$ and just analyze $\ol{d}(f_{A}(t_2),f_{A}(t_1))$ since $\ol{d}(f_{A}(t_1),f_{A}(t_2))$ will be zero. From the definition,
\begin{equation*}\ol{d}(f_{A}(t_2),f_{A}(t_1))=\ol{d}\left(\bigcup_a f_{a}(t_2),\bigcup_a f_{a}(t_1)\right) = \max_{x \in \underset{a \in A}{\bigcup}f_a(t_2)} d\left(x, \bigcup_a f_a(t_1) \right).\end{equation*}
Since $x \in f_{a}(t_2)$ for some $a \in A$, $f_a(t_1) \subset \cup_a f_a(t_1)$ and $h(f_{a}(t_1),f_{a}(t_2)) < \varepsilon$ for all $a \in A$, we have
$$d\left(x, \bigcup_a f_a(t_1) \right) < d\left(x, f_a(t_1) \right) \leq \max_{x \in f_a(t_2)} d\left(x, f_a(t_1) \right) = \ol{d}(f_a(t_2),f_a(t_1)) < \varepsilon.$$
Intuitively, this distance is smaller then $\varepsilon$ because for every $x \in \cup_a f_a(t_2)$ there is a point $y$ in some $f_{a}(t_1)$ whose distance to $x$ is smaller than $\varepsilon$. Thus, $h(f_{A}(t_1),f_{A}(t_2)) = \ol{d}(f_{A}(t_2),f_{A}(t_1)) < \varepsilon$ and $f$ is a path since it is uniformly continuous.

\end{proof}

 Notice that in the process we have shown that a particular uncountable union of compact sets is compact, which is particularly interesting.

\begin{teorema}
 $\HE{\Rn}$ is pathwise connected.
 \end{teorema}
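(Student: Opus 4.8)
The plan is to follow precisely the three–step outline stated above. All the analytic content is already in hand: Lemma~\ref{translationpath} gives the translation path, and Lemma~\ref{ctr} connects an arbitrary compact set to any rectangle containing it. So, given arbitrary $A,B \in \HE{\Rn}$, I would produce a path from $A$ to $B$ by \emph{concatenating} three paths: one from $A$ to an enclosing rectangle $A_R$, one translation path from $A_R$ to a congruent enclosing rectangle $B_R$ of $B$, and the \emph{reverse} of the path (from Lemma~\ref{ctr}) that joins $B$ to $B_R$. Since $A$ and $B$ are arbitrary, this yields path-connectedness.

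The first concrete step is to build the two rectangles so that one is a translate of the other. As $A$ and $B$ are compact, for each coordinate $i$ the numbers $\alpha^i=\min_{a\in A}a^i$, $\beta^i=\max_{a\in A}a^i$, $\gamma^i=\min_{b\in B}b^i$, $\delta^i=\max_{b\in B}b^i$ are well defined; I set $L^i=\max\{\beta^i-\alpha^i,\ \delta^i-\gamma^i,\ 1\}$ and define points $m_A,M_A,m_B,M_B \in \Rn$ by $m_A^i=\alpha^i$, $M_A^i=\alpha^i+L^i$, $m_B^i=\gamma^i$, $M_B^i=\gamma^i+L^i$. Then $A_R:=\prod_{i=1}^n[m_A^i,M_A^i]$ contains $A$ and $B_R:=\prod_{i=1}^n[m_B^i,M_B^i]$ contains $B$; the extra ``$1$'' in $L^i$ forces $m_A\ne M_A$ and $m_B\ne M_B$, so Lemma~\ref{ctr} applies to both; and since the two rectangles have the same side lengths $L^i$, one has $B_R=A_R+\vv{v}$ with $\vv{v}$ the vector $v^i=\gamma^i-\alpha^i$.

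Next I would glue. Define $f:[0,1]\to\HE{\Rn}$ by
\[
 f(t)=\begin{cases}
 f_{A,m_A,M_A}(3t) & \text{if } t\in[0,\tfrac{1}{3}],\\
 A_R+(3t-1)\vv{v} & \text{if } t\in[\tfrac{1}{3},\tfrac{2}{3}],\\
 f_{B,m_B,M_B}(3-3t) & \text{if } t\in[\tfrac{2}{3},1].
 \end{cases}
\]
By Lemma~\ref{ctr} the first piece runs from $f_{A,m_A,M_A}(0)=A$ to $f_{A,m_A,M_A}(1)=A_R$; by Lemma~\ref{translationpath} (affinely reparametrized in $t$) the middle piece runs from $A_R$ to $A_R+\vv{v}=B_R$; and the last piece, being the reverse of the path of Lemma~\ref{ctr}, runs from $f_{B,m_B,M_B}(1)=B_R$ to $f_{B,m_B,M_B}(0)=B$. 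The three formulas agree at $t=\tfrac{1}{3}$ and $t=\tfrac{2}{3}$, and each is continuous on its closed subinterval, being an affine reparametrization (possibly composed with $t\mapsto 1-t$) of a map already proved uniformly continuous. Hence, by the pasting lemma for continuous functions, $f$ is continuous on $[0,1]$, and since $f(0)=A$, $f(1)=B$, it is a path from $A$ to $B$.

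I do not expect a genuine obstacle here: the heavy lifting—the proof that the uncountable union $f_{A,m,M}(t)$ remains compact (Step~1 of Lemma~\ref{ctr}) and that it varies continuously (Step~2, via Lemmas~\ref{dlem} and \ref{dtr})—is already done. The only points needing a moment's attention in assembling the theorem are making the enclosing rectangles non-degenerate so Lemma~\ref{ctr} is legitimately applicable (handled by the ``$+1$'' in $L^i$), and the routine verification that affine reparametrization, time-reversal, and concatenation of paths again produce paths.
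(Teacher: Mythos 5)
Your proposal is correct and follows essentially the same route as the paper: enclose $A$ and $B$ in congruent rectangles, apply Lemma~\ref{ctr} twice and Lemma~\ref{translationpath} once, and concatenate. Your explicit construction of $A_R$ and $B_R$ (with the ``$+1$'' guaranteeing non-degeneracy so that Lemma~\ref{ctr} applies) is in fact more careful than the paper's, which simply asserts that such rectangles exist.
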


 \begin{proof}
 Let $A$ and $B$ be elements of $\HE{\Rn}$. Since they are compact subsets of $\Rn$, we can find two rectangles $A_R$ and $B_R$ such that $A \subset A_R$, $B \subset B_R$ and $B_R$ is a translation of $A_R$. Putting together everything we made, we know that there exist
 \begin{enumerate}[label={\arabic*)}]
  \item a path $f:[0,1] \to \HE{\Rn}$ such that $f(0)=A$ and $f(1)=A_R$ by Lemma \ref{ctr};
  \item a path $g:[0,1] \to \HE{\Rn}$ such that $g(0)=A_R$ and $g(1)=B_R$ by Lemma \ref{translationpath};
  \item a path $h:[0,1] \to \HE{\Rn}$ such that $h(0)=B$ and $h(1)=B_R$ by Lemma \ref{ctr}.
 \end{enumerate}
Using the parametrizations
\begin{align*}
\alpha: [0,1] \to [0,3], \alpha(t)=3t; \\
\gamma:[1,2] \to [0,1], \gamma(t)=t-1;\\
\gamma_0:[2,3] \to [0,1], \gamma(t)= 3-t,
\end{align*}
we obtain a path $F:[0,1] \to \HE{\Rn}$ from $A$ to $B$ defining
$$
F(t)= \begin{cases}
       (f \circ \alpha)(t), \text{ se } 0\leq t \leq \dfrac{1}{3} \\ \\
       (g \circ \gamma \circ \alpha)(t), \text{ se } \dfrac{1}{3} < t \leq \dfrac{2}{3} \\ \\
       (h \circ \gamma_0 \circ \alpha)(t), \text{ se } \dfrac{2}{3} < t \leq 1.
      \end{cases}
$$

\end{proof}

\section*{Acknowledgments}
The first author was supported by the Conselho Nacional de Desenvolvimento Científico e Tecnológico (CNPq) under grant 163467/2021-8.

\newpage


\begin{thebibliography}{99}
 \bibitem{Barn} BARNSLEY, Michael F. \textbf{Fractals everywhere.} Academic press, 2014.
 \bibitem{paths} COSTANTINI, Camillo; KUBÍS, Wies\l aw. \noindent\textbf{Paths in hyperspaces.} APPLIED GENERAL TOPOLOGY, v. 4, pp. 377-390, 2003.
 \bibitem{Nad} ILLANES, Alejandro; NADLER, Sam. \textbf{Hyperspaces: fundamentals and recent advances.} CRC Press, 1999.
 \bibitem{Munk} MUNKRES, James. \textbf{Topology.} Second Edition, Pearson Education Limited, 2014.
\end{thebibliography}
\end{document}